\RequirePackage{fix-cm}
\documentclass[smallextended]{svjour3}
\smartqed
\usepackage{graphicx}
\usepackage{amsmath,amssymb,amsfonts,latexsym}
\providecommand{\diff}{\mathop{}\!\mathrm{d}}

\begin{document}

\title{Viscosity solutions of the integro-differential equation for the Cram\'{e}r--Lundberg model with annuity payments and investments}

\titlerunning{Viscosity solutions for the Cram\'{e}r--Lundberg model}

\author{Platon Promyslov}


\institute{P. Promyslov \at
              HSE University, Faculty of Computer Science, Moscow, 101000, Russia \\
              \email{platon.promyslov@gmail.com}           
}

\date{Received: date / Accepted: date}

\maketitle

\begin{abstract}
This paper examines the dynamics of an insurance company's surplus within the framework of the Cram\'{e}r--Lundberg model with annuity payments and investments. It is established that the survival probability is the unique bounded viscosity solution of the corresponding singular second-order integro-differential equation. The uniqueness of the solution in the class of bounded functions is proved using a proven non-local comparison principle. The main result of the work is the justification of smoothness: by leveraging the properties of the semicontinuous envelopes of viscosity solutions, the continuity of the survival function is proved. This allows the problem to be reduced to local elliptic regularity and demonstrates that the solution belongs to the class $C^2((0,\infty))$. Thus, the correctness of the classical integro-differential equation is established without a priori probabilistic assumptions regarding smoothness.

\keywords{Survival probability \and Annuity payments \and Model with investments \and Integro-differential equations \and Viscosity solutions \and Comparison principle \and Regularity}
\subclass{60G44 \and 91G05 \and 35D40 \and 45K05} 
\end{abstract}

\section{Introduction}

The collective risk theory, whose foundations were laid in the works of Lundberg and Cram\'{e}r, traditionally considered the dynamics of an insurance company's reserves without accounting for investment income (see \cite{Asmussen2010}). However, in modern economic realities, investing reserves in financial assets plays a critical role. This allows for hedging against inflationary risks but simultaneously introduces substantial changes to the model. The foundations for modeling such processes were described in the paper by Paulsen \cite{Paulsen1993}.

In the study of ruin probability for models with investments, two main analytical approaches have historically emerged. The first relies on implicit renewal theory and equations in the sense of distributions (see \cite{Goldie1991}). This method is highly effective for finding the exact asymptotics of the ruin probability as the initial capital tends to infinity (see, for example, \cite{Frolova2002,KLP2026,KPro2023}).

The second approach links the survival probability to solutions of integro-differential equations (IDEs), which allows for the global analysis of the unknown function's behavior over its entire domain. The formal derivation of the IDE using It\^{o}'s formula is a standard step; however, it strictly requires a priori twice continuous differentiability of the survival function. Justifying this smoothness presents a non-trivial analytical problem. In earlier works, it was often merely postulated (see, e.g., \cite{PG1997,WW2001}). In \cite{KP2016}, this problem was solved for the annuity model, but at the cost of employing heavy probabilistic machinery based on the subtle properties of exponential functionals of Brownian motion.

An alternative to classical analysis is the theory of viscosity solutions (see, e.g., \cite{Crandall1992,Fleming1993}), which allows for handling non-smooth functions and coefficients that degenerate at the boundary. For non-local integro-differential operators, this approach was deeply developed in \cite{Barles2008}. In \cite{BK2015}, the machinery of viscosity solutions was applied to a non-life insurance model, where the capital experiences downward jumps. In a remark, the authors of \cite{BK2015} indicated that the dual model (with annuity payments and upward jumps) could be investigated analogously.

However, a direct transfer of their arguments to the annuity model encounters serious obstacles. In the classical non-life insurance model, large claims can instantaneously drive the capital to a negative value (the process ``jumps'' across the boundary). In the annuity model, the jumps are positive, so ruin cannot occur as a result of a jump. Crossing the zero level is driven exclusively by the continuous diffusion component and the drift, which implies that at the moment of ruin $\tau$, the capital is exactly equal to zero ($X_\tau = 0$ almost surely). As a consequence, the diffusion coefficient of the equation, which is proportional to $u^2$, degenerates at zero, rendering the boundary value problem highly singular. This specific feature necessitates a fundamentally different formulation of the boundary conditions, a modification of the touching conditions for the non-local operator, and the construction of new estimates to prove the comparison principle.

The present paper not only fills the gap in applying viscosity solutions to the annuity model but also substantially strengthens the results obtainable by this method. While the investigation in \cite{BK2015} is limited to proving the existence and uniqueness of a (non-smooth) viscosity solution, the current work employs the viscosity solution as an intermediate tool to prove the existence of a classical solution.

The aim of this paper is to analytically justify the correctness of the classical IDE for the survival probability in the annuity model using methods from the theory of differential equations, avoiding cumbersome probabilistic analysis. The main results of the paper are as follows:
\begin{enumerate}
    \item It is proved that the survival probability $\Phi(u)$ is a viscosity solution of a singular second-order IDE. The definition of the solution is adapted to the specific nature of the operator with one-sided jumps.
    \item The uniqueness of the viscosity solution in the class of bounded functions is established. For this purpose, a strict comparison principle for the degenerate non-local operator is proved.
    \item The smoothness of the solution is demonstrated. Using the machinery of viscosity solutions, the continuity of the unknown function is proved, which allows invoking classical local elliptic regularity to upgrade the regularity of the viscosity solution to the class $C^2((0, \infty))$. 
\end{enumerate}

Thus, the survival probability is shown to be a classical solution of the boundary value problem without any a priori assumptions about its smoothness.

The paper is organized as follows. Section \ref{sec:model} describes the mathematical model of the capital dynamics and establishes the boundary properties. Section \ref{sec:viscosity_definition} provides the definition of a viscosity solution for the operator under consideration. Section \ref{sec:existence} is devoted to the proof of the existence of the solution, and Section \ref{sec:comparison} to the comparison principle and uniqueness. In the concluding Section \ref{sec:regularity}, the smoothness of the solution is established, and the transition from a viscosity solution to a classical one is proved.

\section{Model description and problem formulation}
\label{sec:model}

Let a stochastic basis $(\Omega, \mathcal{F}, \mathbb{F}=(\mathcal{F}_t)_{t \ge 0}, \mathbb{P})$ satisfying the usual conditions be given. We consider the capital dynamics $X = (X_t)_{t \ge 0}$ of an insurance company that makes annuity payments and invests its reserves in the financial market.

The evolution of the capital consists of operational and investment activities. The company makes continuous payouts at a constant rate $c > 0$. The stream of incoming payments (for example, the release of reserves upon contract termination) is modeled by a compound Poisson process $P_t = \sum_{i=1}^{N_t} \xi_i$, where $N$ is a Poisson process with intensity $\lambda > 0$, and $\{\xi_i\}_{i \ge 1}$ is a sequence of independent positive random variables with cumulative distribution function $F(x)$ and a finite first moment. To prove the uniqueness of the solution (Section \ref{sec:comparison}), it is assumed that the topological support of the measure $F(\diff x)$ is the set $(0, \infty)$, that is, $F(I) > 0$ for any open interval $I \subset (0, \infty)$.

The company's reserve is continuously invested in a risky asset whose price follows a geometric Brownian motion with expected return $a$ and volatility $\sigma > 0$. Thus, the capital dynamics is described by the stochastic differential equation:
\begin{equation*}
\diff X_t = (a X_t - c)\diff t + \sigma X_t \diff W_t + \diff P_t, \quad X_0 = u \ge 0,
\end{equation*}
where $W$ is a standard Brownian motion independent of the process $P$.

To ensure a positive expected capital growth for large values of $u$, we assume the net profit condition holds:
\begin{equation}
\label{eq:net_profit}
a > \frac{\sigma^2}{2} \quad \left(\text{equivalent to } \gamma := \frac{2a}{\sigma^2} > 1\right).
\end{equation}

The time of ruin is defined as the time the process first reaches the negative half-plane:
$\tau = \inf\{t \ge 0 : X_t \le 0\}$, with the convention $\inf \varnothing = \infty$.

Since the distribution of jumps is concentrated on the positive half-axis ($F(0)=0$), crossing zero by a jump is impossible. The capital drops to zero exclusively due to the continuous components (negative drift and diffusion). Consequently, on the event $\{\tau < \infty\}$, it holds that $X_\tau = 0$ almost surely.

The object of interest is the survival probability $\Phi(u) = \mathbb{P}(\tau = \infty \mid X_0 = u)$. For the non-local integral operators to function correctly, we extend the function by zero in the ruin domain: $\Phi(x) = 0$ for $x < 0$. 

Let us examine the asymptotic properties of the function $\Phi(u)$ at the boundaries of its domain.

\begin{lemma}
\label{lem:boundary_zero}
The following limit holds: $\lim_{u \to 0+} \Phi(u) = 0$. Setting $\Phi(0)=0$ yields a function that is right-continuous at zero.
\end{lemma}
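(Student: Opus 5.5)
Our plan is to show that, started from a small $u>0$, the process reaches zero before its first jump and within a short time, with probability tending to one as $u\to0+$. Let $T_1$ denote the first jump time of $N$; it is exponential with parameter $\lambda$ and independent of $W$. On $[0,T_1)$ the capital follows the linear SDE $\diff Y_t=(aY_t-c)\diff t+\sigma Y_t\diff W_t$, $Y_0=u$. Its explicit solution is
\[
Y_t^u=e^{Z_t}\Bigl(u-c\int_0^t e^{-Z_s}\diff s\Bigr),\qquad Z_t=\Bigl(a-\tfrac{\sigma^2}{2}\Bigr)t+\sigma W_t .
\]
Write $I_t=\int_0^t e^{-Z_s}\diff s$. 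Then $Y^u$ hits zero no later than $\theta_u:=\inf\{t: cI_t\ge u\}$. This gives the inclusion
\[
\{\tau=\infty\}\subset\{\theta_u\ge T_1\}\cup\{\theta_u=\infty\},
\]
because if $\theta_u<T_1$ then ruin occurs at or before $\theta_u$. (Here $X$ and $Y$ coincide on $[0,T_1)$, and strictly positive jumps cannot prevent ruin before the first jump.)

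The key steps are as follows.

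\textbf{Step 1.} For a fixed $t_0>0$, we have $I_{t_0}>0$ almost surely, since $e^{-Z_s}>0$ and the paths are continuous. Hence
\[
\mathbb P(\theta_u>t_0)=\mathbb P(cI_{t_0}<u)\to \mathbb P(I_{t_0}=0)=0 \quad\text{as } u\to0+,
\]
by monotone convergence of events. No property of exponential functionals is needed beyond positivity.

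\textbf{Step 2.} Using independence of $T_1$ and $W$, bound
\[
\mathbb P(\theta_u\ge T_1)\le \mathbb P(T_1\le t_0)+\mathbb P(\theta_u>t_0)=1-e^{-\lambda t_0}+\mathbb P(\theta_u>t_0).
\]
The event $\{\theta_u=\infty\}$ is already contained in $\{\theta_u>t_0\}$. Therefore
\[
\limsup_{u\to0+}\Phi(u)\le 1-e^{-\lambda t_0},
\]
and letting $t_0\to0$ gives $\lim_{u\to 0+}\Phi(u)=0$.

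\textbf{Step 3.} The right-continuity claim is then immediate. Setting $\Phi(0)=0$ is consistent with the definition: from $u=0$ we have $\tau=0$, since the ruin time is defined with $X_t\le 0$.

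The step needing the most care is the pathwise identification of $X$ with $Y$ up to $T_1$ together with the measurability and independence bookkeeping. This is needed to justify that ruin before $T_1$ is governed solely by $W$. It is, however, routine given the strong uniqueness of the linear SDE. The step I would double-check is that $\Phi$ is monotone or otherwise well defined as a function of $u$ under the given basis. The argument above does not actually need monotonicity, since it bounds $\Phi(u)$ directly for each small $u$.
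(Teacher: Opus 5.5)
Your proof is correct and follows the same route as the paper: survival forces the first jump to occur before the jump-free diffusion $Y$ hits zero, and that hitting time tends to zero in probability as $u\to0+$. The difference is one of completeness. Your explicit representation $Y^u_t=e^{Z_t}(u-cI_t)$ actually proves this convergence, which the paper only asserts after citing boundary classification. Your inclusion of the event $\{\theta_u=\infty\}$ is also the right bookkeeping: under $a>\sigma^2/2$ the process $Y$ never reaches zero with positive probability, so the paper's remark that $T_u(Y)$ is a.s. finite is inaccurate, though harmless for the argument.
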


\begin{proof}
Consider an auxiliary jump-free process $Y$ defined by the equation 
$$\diff Y_t = (a Y_t - c)\diff t + \sigma Y_t \diff W_t, \quad Y_0=u.$$
Since $c>0$, the boundary $0$ is accessible for the process $Y$ (see the classification of boundaries in \cite[Part II, Sec. 13]{Borodin2002}), and thus the hitting time of zero $T_u(Y) = \inf\{t \ge 0 : Y_t \le 0\}$ is almost surely finite. As $u \to 0+$, the exit time $T_u(Y)$ tends to zero in probability.

The survival event for the process $X$ is possible only if the first positive jump $\tau_1 \sim \text{Exp}(\lambda)$ occurs strictly before the diffusion part hits zero. Indeed, if $\tau_1 > T_u(Y)$, the process $X$ coincides with $Y$ on the entire interval $[0, T_u(Y)]$, making ruin inevitable. Bounding the probabilities, we obtain:
$$ \Phi(u) \le \mathbb{P}(\tau_1 \le T_u(Y)) = \mathbb{E}[1 - e^{-\lambda T_u(Y)}]. $$
Due to the boundedness and continuity of the function $t \mapsto 1 - e^{-\lambda t}$, the expectation tends to zero as $T_u(Y) \xrightarrow{\mathbb{P}} 0$.
\end{proof}

\begin{lemma}
\label{lem:boundary_inf}
Under condition \eqref{eq:net_profit}, the equality $\lim_{u \to \infty} \Phi(u) = 1$ holds.
\end{lemma}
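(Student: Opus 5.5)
Our plan is a comparison argument: we bound the ruin probability from above by the ruin probability of a jump-free process, which has an explicit representation through the exponential functional of Brownian motion. Since all jumps are positive, pathwise comparison for the linear SDE gives $X_t \ge Y_t$ for all $t$, where $Y$ is the process from the proof of Lemma~\ref{lem:boundary_zero} with the same $W$ and $Y_0=u$. Indeed, the difference $D=X-Y$ satisfies $\diff D_t = aD_t\diff t+\sigma D_t\diff W_t+\diff P_t$ with $D_0=0$. Its solution is $D_t=\mathcal{E}_t\int_0^t \mathcal{E}_{s}^{-1}\diff P_s\ge 0$, with $\mathcal{E}_t=\exp((a-\sigma^2/2)t+\sigma W_t)$. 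Hence $\{Y_t>0\ \forall t\}\subset\{\tau=\infty\}$, and it suffices to show $\mathbb{P}(Y_t>0\ \forall t)\to 1$ as $u\to\infty$. (Here it does not matter that the boundary $0$ is accessible for $Y$: accessibility only means ruin has positive probability, not that it is certain.)

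Next we solve $Y$ explicitly by variation of constants:
$$Y_t=\mathcal{E}_t\Bigl(u-c\int_0^t \mathcal{E}_s^{-1}\diff s\Bigr).$$
Since $\mathcal{E}_t>0$, the process $Y$ stays positive forever if and only if $c\int_0^t\mathcal{E}_s^{-1}\diff s<u$ for all $t$. This holds in particular when $c\,I_\infty<u$, where
$$I_\infty=\int_0^\infty \exp\bigl(-(a-\sigma^2/2)s-\sigma W_s\bigr)\diff s.$$
Therefore $1-\Phi(u)\le \mathbb{P}(cI_\infty\ge u)$.

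The key step is that $I_\infty<\infty$ almost surely, and this is exactly where condition \eqref{eq:net_profit} enters. By the strong law of large numbers for Brownian motion, $W_s/s\to 0$ a.s. So for a.e.\ $\omega$ and $s$ large we have $-(a-\sigma^2/2)s-\sigma W_s\le -\tfrac12(a-\sigma^2/2)s$, and the integrand is dominated by an integrable exponential because $a-\sigma^2/2>0$. Then $\mathbb{P}(cI_\infty\ge u)\to 0$ as $u\to\infty$ simply because $I_\infty$ is an a.s.\ finite random variable. Combined with $\Phi\le 1$, this gives $\lim_{u\to\infty}\Phi(u)=1$.

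The only delicate point is the comparison $X\ge Y$ together with the exact event identity for the positivity of $Y$. Both follow from the explicit linear SDE solutions, so we expect no real obstacle. Alternatively, one could cite Dufresne's identity, $I_\infty$ being distributed as a reciprocal gamma variable with parameter $\gamma-1$. This would give the explicit rate $1-\Phi(u)=O(u^{-(\gamma-1)})$, but it is not needed for the limit.
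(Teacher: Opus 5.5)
Your proof is correct, and it takes a genuinely different route from the paper's.

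\textbf{The paper's route.} The paper compares $X$ with a geometric Brownian motion $Z$ whose drift is reduced to $a-\delta>\sigma^2/2$. Since $aX_t-c\ge(a-\delta)X_t$ holds only while $X_t\ge c/\delta$, this comparison is local. It is quoted from a comparison theorem for SDEs and then combined with the positivity of the jumps. The conclusion rests on the transience of $Z$: for large $u$, $Z$ rarely descends to the level $c/\delta$.

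\textbf{Your route.} You keep the full drift $aX-c$ and discard only the jumps. Your comparison $X\ge Y$ is therefore global, and it comes for free from the variation-of-constants formula for the linear SDE satisfied by $X-Y$. No comparison theorem and no localization are needed. The constant outflow $-c$ is then absorbed exactly into the perpetuity $I_\infty$, whose a.s.\ finiteness is precisely the net profit condition \eqref{eq:net_profit}.

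\textbf{What each buys.} Your argument is self-contained and also quantitative. Combined with Dufresne's identity it gives $1-\Phi(u)=O(u^{1-\gamma})$, which, to my knowledge, is the known exact order of decay in this model. The paper's route, once the hitting probability of $Z$ is computed, only yields the bound $(c/(\delta u))^{\gamma-1-2\delta/\sigma^2}$, losing $2\delta/\sigma^2$ in the exponent.

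\textbf{On your parenthetical remark.} It is well taken. Your computation shows $\mathbb{P}(T_u(Y)=\infty)\ge\mathbb{P}(cI_\infty<u)>0$ for every $u>0$. So the assertion in the proof of Lemma~\ref{lem:boundary_zero} that $T_u(Y)$ is a.s.\ finite is not correct under \eqref{eq:net_profit}. The conclusion of that lemma is unaffected, since its argument only uses $T_u(Y)\to 0$ in probability, which remains true. The value $+\infty$ is allowed there, with $1-e^{-\lambda T_u(Y)}=1$ on that event.
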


\begin{proof}
Since $a > \sigma^2/2$, there exists a sufficiently small $\delta > 0$ such that $a - \delta > \sigma^2/2$. Consider a geometric Brownian motion $Z$ given by the equation $\diff Z_t = (a - \delta)Z_t \diff t + \sigma Z_t \diff W_t$ with the initial condition $Z_0 = u$. 

In the region of large capital values $X_t \ge c/\delta$, the drift of the process $X$ dominates the drift of the process $Z$: $a X_t - c \ge (a-\delta)X_t$. Since the jumps $P_t$ are positive and are added to the overall diffusion process, the classical comparison theorem for stochastic differential equations (see, for example, \cite[Chapter VI, Theorem 1.1]{IkedaWatanabe1981}) implies that $X_t \ge Z_t$ almost surely up to the moment $X_t$ first drops below the level $c/\delta$. 

The parameters of the process $Z_t$ satisfy condition \eqref{eq:net_profit}, hence $Z_t \to \infty$ a.s. as $t \to \infty$. The probability that the trajectory of $Z_t$ ever drops to the level $c/\delta$ tends to zero as $u \to \infty$. Consequently, the probability of the dominating process $X_t$ reaching this level (and, therefore, the probability of ruin) also tends to zero. Thus, $\lim_{u \to \infty} \Phi(u) = 1$.
\end{proof}

Thus, it is required to find a solution $\Phi(u)$ in the class of bounded functions satisfying the boundary conditions:
\begin{equation}
\label{eq:boundary_conditions}
\Phi(0) = 0, \quad \lim_{u \to \infty} \Phi(u) = 1.
\end{equation}

\section{Definition of a viscosity solution}
\label{sec:viscosity_definition}

Since the coefficient of the leading derivative degenerates at zero (due to the factor $u^2$), the classical definition of a solution to the boundary value problem up to the boundary is inapplicable. In this regard, we will use the framework of viscosity solutions, adapted for integro-differential operators.

Let us consider the linear operator $\mathcal{L}$ acting on functions $\varphi \in C^2((0, \infty)) \cap C_b([0, \infty))$:
\begin{equation*}
\mathcal{L}\varphi(u) = \frac{\sigma^2 u^2}{2} \varphi''(u) + (a u - c)\varphi'(u) + \lambda \int_0^\infty[\varphi(u+y) - \varphi(u)] \diff F(y).
\end{equation*}
This operator is degenerate elliptic at the boundary but preserves local uniform ellipticity strictly inside the domain $(0, \infty)$. The global boundedness of the test functions ($\varphi \in C_b$) is necessary to ensure the proper integrability of the non-local term at infinity.

For an arbitrary locally bounded function $w:[0, \infty) \to \mathbb{R}$, we introduce its semicontinuous envelopes: the upper semicontinuous envelope $w^*(x) = \limsup_{y \to x} w(y)$ and the lower semicontinuous envelope $w_*(x) = \liminf_{y \to x} w(y)$.

The specific nature of the operator $\mathcal{L}$ lies in the positivity of the jumps ($y > 0$). The value of the integral at a point $x_0$ depends on the function's behavior exclusively on the ray $[x_0, \infty)$. This allows us to use a relaxed extremum condition, requiring a global touch only to the right of the point under consideration.

\begin{definition}
\label{def:viscosity_sol}
A function $w:[0, \infty) \to \mathbb{R}$ is called:
\begin{enumerate}
    \item A viscosity subsolution of the equation $\mathcal{L} w = 0$ in $(0, \infty)$, if for any point $x_0 \in (0, \infty)$ and any test function $\varphi \in C^2((0, \infty)) \cap C_b([0, \infty))$ satisfying the touching conditions:
    \begin{itemize}
        \item $w^*(x_0) = \varphi(x_0)$,
        \item $w^*(x) \le \varphi(x)$ locally in some neighborhood of $x_0$ and globally for all $x \ge x_0$,
    \end{itemize}
    the inequality $\mathcal{L}\varphi(x_0) \ge 0$ holds.

    \item A viscosity supersolution of the equation $\mathcal{L} w = 0$ in $(0, \infty)$, if for any point $x_0 \in (0, \infty)$ and any test function $\varphi \in C^2((0, \infty)) \cap C_b([0, \infty))$ satisfying the touching conditions:
    \begin{itemize}
        \item $w_*(x_0) = \varphi(x_0)$,
        \item $w_*(x) \ge \varphi(x)$ locally in some neighborhood of $x_0$ and globally for all $x \ge x_0$,
    \end{itemize}
    the inequality $\mathcal{L}\varphi(x_0) \le 0$ holds.

    \item A viscosity solution, if it is simultaneously a viscosity subsolution and a viscosity supersolution.
\end{enumerate}
\end{definition}

\begin{remark}
Since it will be shown later that the sought survival probability $\Phi(u)$ is continuous on $[0, \infty)$, its semicontinuous envelopes trivially coincide: $w^* = w_* = w$.
\end{remark}

\begin{remark}
This definition is consistent with the classical one: if $w \in C^2((0, \infty)) \cap C_b([0, \infty))$ satisfies the equation $\mathcal{L}w = 0$ pointwise, it is automatically a viscosity solution (it suffices to choose $w$ itself as the test function $\varphi$). 
\end{remark}

Let us now define the notion of a solution for the considered boundary value problem.

\begin{definition}
\label{def:viscosity_bvp_sol}
A function $\Phi:[0, \infty) \to [0, 1]$ is called a viscosity solution of the boundary value problem if it is a viscosity solution of the equation $\mathcal{L} \Phi = 0$ in the domain $(0, \infty)$ in the sense of Definition \ref{def:viscosity_sol} and satisfies the boundary conditions \eqref{eq:boundary_conditions}.
\end{definition}

\section{Existence of a viscosity solution}
\label{sec:existence}

In this section, we prove that the survival probability $\Phi(u)$ is a viscosity solution of the equation $\mathcal{L} \Phi = 0$ in the sense of Definition \ref{def:viscosity_sol}. 

If the function $\Phi$ belonged a priori to the class $C^2((0, \infty)) \cap C_b([0, \infty))$, applying It\^{o}'s formula to the bounded martingale $M_t = \Phi(X_{t \wedge \tau}) = \mathbb{P}(\tau = \infty \mid \mathcal{F}_{t \wedge \tau})$ would immediately yield the identity $\mathcal{L}\Phi = 0$. The theory of viscosity solutions enables us to formalize this deduction without any assumptions on smoothness, relying exclusively on the dynamic programming principle (DPP).

\begin{theorem}
\label{thm:DPP}
For any initial capital $u > 0$ and any finite stopping time $\theta$, the following identity holds:
$$ \Phi(u) = \mathbb{E}_u[\Phi(X_{\theta \wedge \tau})]. $$
\end{theorem}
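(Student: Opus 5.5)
The plan is to prove the identity through the martingale $M_t = \mathbb{P}_u(\tau = \infty \mid \mathcal{F}_t)$ and the strong Markov property of $X$. The SDE has Lipschitz coefficients, and the jump part is an independent compound Poisson process, so the solution $X$ is a strong Markov process with respect to $\mathbb{F}$ under the family of measures $(\mathbb{P}_x)_{x \ge 0}$, where $\mathbb{P}_x$ denotes the law started from $x$. I will also record that $x \mapsto \Phi(x)$ is Borel measurable. This follows from the measurable dependence of the solution of the SDE on the initial condition, for instance via the Yamada--Watanabe representation $X = G(x, W, P)$ with $G$ jointly measurable. Measurability is needed so that $\Phi(X_{\theta\wedge\tau})$ is a random variable.

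\textbf{Step 1: identity on the event $\{\tau = \infty\}$.} Set $\rho = \theta \wedge \tau$. On $\{\tau > \rho\}$ the survival event is shift-invariant: $\{\tau = \infty\} = \{\tau \circ \vartheta_\rho = \infty\}$, where $\vartheta_\rho$ is the shift operator. Indeed, before $\rho$ the path stays strictly positive, so after $\rho$ ruin happens if and only if the shifted path reaches $(-\infty,0]$. On $\{\tau \le \theta\}$, i.e.\ $\rho = \tau < \infty$, we have $X_\rho = 0$ almost surely, as noted in Section~\ref{sec:model}. There $\mathbf{1}_{\{\tau=\infty\}} = 0 = \Phi(0) = \Phi(X_\rho)$, using the convention $\Phi(0)=0$ from Lemma~\ref{lem:boundary_zero}, which also agrees with the definition of $\tau$ via $X_t \le 0$.

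\textbf{Step 2: strong Markov property.} Since $\theta$ is finite, $\rho$ is a finite stopping time. The strong Markov property then gives
$$\mathbb{P}_u(\tau = \infty \mid \mathcal{F}_\rho) = \mathbf{1}_{\{\tau > \rho\}}\,\mathbb{P}_{X_\rho}(\tau = \infty) = \mathbf{1}_{\{\tau>\rho\}}\Phi(X_\rho) = \Phi(X_\rho) \quad \text{a.s.},$$
where the last equality uses Step 1 on $\{\tau \le \theta\}$. Taking expectations yields $\Phi(u) = \mathbb{P}_u(\tau=\infty) = \mathbb{E}_u[\Phi(X_{\theta\wedge\tau})]$. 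Boundedness $0 \le \Phi \le 1$ makes all expectations finite.

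\textbf{Main obstacle.} The delicate point is justifying the strong Markov property together with the measurability of $\Phi$, since no continuity of $\Phi$ is yet available. I would handle this in one of two ways. The first is to cite the standard result that solutions of SDEs driven by a Brownian motion and a Poisson random measure with Lipschitz coefficients are strong Markov (Feller) processes, as in Ikeda--Watanabe, Chapter IV. The second is to argue directly: write $X_{\rho+s}$ as the solution of the same SDE driven by the shifted noises $(W_{\rho+s}-W_\rho,\ P_{\rho+s}-P_\rho)$. These are independent of $\mathcal{F}_\rho$ and have the same law as the original noises, by the strong Markov property of Lévy processes. Pathwise uniqueness then identifies the conditional law of the post-$\rho$ path with $\mathbb{P}_{X_\rho}$.

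A minor additional issue is the convention at the boundary. Ruin at exactly $X=0$ must be counted as ruin in the definition of $\Phi(0)$, and $X_\tau = 0$ must hold almost surely so that there is no overshoot; both are given in the model description.
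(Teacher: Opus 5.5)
Your proof is correct and follows the same route as the paper. You condition on $\mathcal{F}_{\theta\wedge\tau}$ and split into two events: on $\{\tau\le\theta\}$ you use $X_\tau=0$, so that $\Phi(X_{\theta\wedge\tau})=\Phi(0)=0=\mathbf{1}_{\{\tau=\infty\}}$, and on $\{\theta<\tau\}$ you use the strong Markov property to identify the conditional survival probability with $\Phi(X_\theta)$; your added details (Borel measurability of $\Phi$, the shift identity $\tau=\rho+\tau\circ\vartheta_\rho$ on $\{\tau>\rho\}$, and the justification of the strong Markov property) fill in points the paper leaves implicit.
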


\begin{proof}
Using the tower property of conditional expectation, we write
$$\Phi(u) = \mathbb{E}_u[\mathbf{1}_{\{\tau = \infty\}}] = \mathbb{E}_u \big[ \mathbb{E}_u[\mathbf{1}_{\{\tau = \infty\}} \mid \mathcal{F}_{\theta \wedge \tau}] \big].$$
On the set $\{\theta \ge \tau\}$, ruin has already occurred. Due to the specifics of the model (the absence of negative jumps), ruin occurs continuously; therefore, $X_\tau = 0$ a.s., which implies $\Phi(X_{\theta \wedge \tau}) = \Phi(0) = 0$. This perfectly matches the value of the indicator $\mathbf{1}_{\{\tau = \infty\}} = 0$.
On the set $\{\theta < \tau\}$, the process is in the domain of strictly positive values. By the strong Markov property of $X$ and the measurability of the event $\{\tau=\infty\} \in \mathcal{F}_\infty$, the conditional expectation $\mathbb{E}_u[\mathbf{1}_{\{\tau = \infty\}} \mid \mathcal{F}_{\theta \wedge \tau}]$ is almost surely equal to $\Phi(X_\theta)$. Combining these cases completes the proof.
\end{proof}

\begin{theorem}
The function $\Phi(u)$ is a viscosity subsolution of the equation $\mathcal{L} \Phi = 0$.
\end{theorem}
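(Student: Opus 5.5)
We argue by contradiction, using the dynamic programming principle (Theorem \ref{thm:DPP}) together with It\^{o}'s formula applied to the smooth test function rather than to $\Phi$. Fix $x_0 \in (0,\infty)$ and a test function $\varphi \in C^2((0,\infty)) \cap C_b([0,\infty))$ with $\Phi^*(x_0) = \varphi(x_0)$, $\Phi^* \le \varphi$ on a neighbourhood $(x_0-r, x_0+r)$ and on $[x_0,\infty)$. Suppose that $\mathcal{L}\varphi(x_0) < 0$.

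\textbf{Step 1: strict touching and a uniform margin.} We first replace $\varphi$ by $\varphi_\varepsilon(x) = \varphi(x) + \varepsilon\,\psi(x-x_0)$. Here $\psi$ is a smooth bounded function with $\psi(0)=\psi'(0)=\psi''(0)=0$, $\psi>0$ elsewhere, and $\psi$ constant for $|x|\ge r/2$. Then $\mathcal{L}\varphi_\varepsilon(x_0) = \mathcal{L}\varphi(x_0) + \varepsilon\lambda\int \psi(y)\,\diff F(y)$, which stays negative for small $\varepsilon$. The touching also becomes strict, with $\Phi^* \le \varphi_\varepsilon - \eta$ on $\{\delta\le |x-x_0| < r\}\cup[x_0+\delta,\infty)$ for some $\eta>0$.

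By continuity of the local part of $\mathcal{L}\varphi_\varepsilon$ and dominated convergence in the jump term ($\varphi_\varepsilon$ is bounded and continuous), we get $\mathcal{L}\varphi_\varepsilon(x) \le -\kappa<0$ for $|x-x_0|\le \delta$, with $\delta<\min(r,x_0)/2$.

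\textbf{Step 2: approximating sequence and the stopping time.} Choose $u_n \to x_0$ with $\Phi(u_n)\to \Phi^*(x_0)$. Let
\[
\theta_n = \inf\{t\ge0 : |X_t - x_0|\ge \delta\}\wedge h,
\]
for a small fixed $h>0$. Since $\delta<x_0$, we have $\theta_n<\tau$, so Theorem \ref{thm:DPP} gives $\Phi(u_n) = \mathbb{E}_{u_n}[\Phi(X_{\theta_n})]$.

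At time $\theta_n$ the post-jump or exit position $X_{\theta_n}$ falls into one of three cases:
\begin{enumerate}
\item it lies in $(x_0-\delta,x_0+\delta)$ (stopped at $h$);
\item it lies on the boundary $x_0\pm\delta$ after continuous exit;
\item it lies at $X_{\theta_n-}+\xi$ after a jump.
\end{enumerate}
Jumps are positive, so in case 3 the point satisfies $X_{\theta_n} > x_0-\delta$, and if it leaves the ball it lands in $[x_0+\delta,\infty)$. This is exactly where the one-sided global inequality $\Phi^*\le\varphi$ on $[x_0,\infty)$ suffices, and why Definition \ref{def:viscosity_sol} only requires touching to the right. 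In all cases $\Phi(X_{\theta_n}) \le \Phi^*(X_{\theta_n})\le \varphi_\varepsilon(X_{\theta_n})$.

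\textbf{Step 3: It\^{o}'s formula and the contradiction.} On $[0,\theta_n]$ the process stays in $(x_0-\delta,\infty)\subset(0,\infty)$, where $\varphi_\varepsilon$ is $C^2$ before the final jump. It\^{o}'s formula for jump diffusions gives
\[
\mathbb{E}_{u_n}[\varphi_\varepsilon(X_{\theta_n})] = \varphi_\varepsilon(u_n) + \mathbb{E}_{u_n}\int_0^{\theta_n}\mathcal{L}\varphi_\varepsilon(X_s)\,\diff s \le \varphi_\varepsilon(u_n) - \kappa\,\mathbb{E}_{u_n}[\theta_n].
\]
The compensator of the jump part uses $\varphi_\varepsilon$ evaluated at $X_{s-}+y$, which is valid because $\varphi_\varepsilon$ is bounded and globally defined. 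The stochastic integral is a true martingale, since $\varphi_\varepsilon'$ is bounded on the compact ball and the integrand $\sigma X\varphi'_\varepsilon$ is bounded there.

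Hence
\[
\Phi(u_n) \le \varphi_\varepsilon(u_n) - \kappa\,\mathbb{E}_{u_n}[\theta_n].
\]
We need $\mathbb{E}_{u_n}[\theta_n] \ge c_0>0$ uniformly in $n$. This holds because exit from a $\delta/2$-ball around $u_n$ before time $h$ has probability bounded away from $1$ uniformly for starting points near $x_0$; this follows from the bounded coefficients and jump intensity $\lambda$. Letting $n\to\infty$, and using $\varphi_\varepsilon(u_n)\to\varphi(x_0)=\Phi^*(x_0)$ and $\Phi(u_n)\to\Phi^*(x_0)$, we get $\Phi^*(x_0)\le\Phi^*(x_0)-\kappa c_0$, a contradiction. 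So $\mathcal{L}\varphi(x_0)\ge 0$.

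Strictness from Step 1 (the margin $\eta$) is in fact not needed for this subsolution direction; it is kept only as a safeguard. The main obstacle is the careful handling of Step 2. We must justify that the overshoot by a jump lands only in the region where the global comparison $\Phi^*\le\varphi$ is assumed, and that It\^{o}'s formula is applicable up to $\theta_n$ although $\varphi$ is only $C^2$ on $(0,\infty)$. Both rely on positivity of jumps and on $\delta<x_0$ keeping the continuous path away from the degenerate point $0$. The uniform lower bound on $\mathbb{E}[\theta_n]$ is routine by comparison with a Brownian exit estimate.
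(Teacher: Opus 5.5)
Your proof is correct and follows essentially the same route as the paper: contradiction, the DPP at the exit time $\theta_n$ from a small ball capped at $h$, It\^{o}'s formula for the test function, and positivity of the jumps to guarantee that $X_{\theta_n}$ lands in the region where $\Phi^*\le\varphi$. The differences are minor: the perturbation $\varphi_\varepsilon$ in Step 1 is superfluous (as you note yourself), and you finish with a uniform bound $\mathbb{E}_{u_n}[\theta_n]\ge c_0>0$ rather than the paper's convergence claim $\mathbb{E}_{x_n}[\theta_n]\to\mathbb{E}_{x_0}[\theta_0]$, which is a cleaner and more robust way to close the argument.
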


\begin{proof}
Let $x_0 \in (0, \infty)$ and let $\varphi \in C^2((0, \infty)) \cap C_b([0, \infty))$ be a test function satisfying the touching conditions: $\Phi^*(x_0) = \varphi(x_0)$ and $\Phi^*(x) \le \varphi(x)$ locally in some neighborhood of $x_0$ and globally for all $x \ge x_0$.

Assume the contrary: $\mathcal{L}\varphi(x_0) < 0$. By the continuity of the function $\mathcal{L}\varphi$, there exist constants $\delta > 0$ and $\varepsilon > 0$ such that $\mathcal{L}\varphi(x) \le -\delta$ for all $x \in B_\varepsilon(x_0) = (x_0-\varepsilon, x_0+\varepsilon) \subset (0, \infty)$. 

By the definition of the upper semicontinuous envelope, there exists a sequence of initial points $x_n \to x_0$ such that $\Phi(x_n) \to \Phi^*(x_0) = \varphi(x_0)$. Consider the process with the initial condition $X_0 = x_n$. Let us denote the first exit time from the $\varepsilon$-neighborhood as $\tau_n = \inf\{t > 0 : X_t \notin B_\varepsilon(x_0)\}$, and set $\theta_n = \tau_n \wedge h$ for a fixed small $h > 0$.

Applying It\^{o}'s formula to $\varphi(X_t)$ on the stochastic interval $[0, \theta_n]$, we obtain:
$$ \mathbb{E}_{x_n}[\varphi(X_{\theta_n})] = \varphi(x_n) + \mathbb{E}_{x_n} \int_0^{\theta_n} \mathcal{L}\varphi(X_s) \diff s \le \varphi(x_n) - \delta \cdot \mathbb{E}_{x_n}[\theta_n]. $$

According to the DPP (Theorem \ref{thm:DPP}), $\Phi(x_n) = \mathbb{E}_{x_n}[\Phi(X_{\theta_n})]$. At the stopping time $\theta_n$, the process $X$ is either inside $B_\varepsilon(x_0)$ or has left it. Since the jumps of the process are strictly positive, the trajectory cannot jump downwards. Consequently, before exiting $B_\varepsilon(x_0)$, the process never drops below the level $x_0 - \varepsilon$, meaning that $X_{\theta_n} \ge x_0 - \varepsilon$ always holds. By the touching condition, the inequality $\Phi^* \le \varphi$ holds locally in $B_\varepsilon(x_0)$ and globally on the ray $[x_0, \infty)$. Thus, for a sufficiently small $\varepsilon$, it holds on the entire set of possible values of $X_{\theta_n}$, yielding $\Phi(X_{\theta_n}) \le \varphi(X_{\theta_n})$ almost surely.

Replacing $\varphi$ by $\Phi$ on the left-hand side of the inequality, we arrive at:
$$ \Phi(x_n) \le \varphi(x_n) - \delta \cdot \mathbb{E}_{x_n}[\theta_n]. $$
Let us pass to the limit as $n \to \infty$. The left side $\Phi(x_n)$ converges to $\varphi(x_0)$ by the choice of the sequence $x_n$. The right side $\varphi(x_n)$ also converges to $\varphi(x_0)$ due to the continuity of the test function. From the theorem on the continuous dependence of trajectories of stochastic differential equations on the initial condition, it follows that the expectations of the stopping times converge (see, for example, \cite[Chapter 5]{KaratzasShreve1991} for the diffusion part, since for small $h$ the probability of a jump occurring before $\theta_n$ tends to zero): $\mathbb{E}_{x_n}[\theta_n] \to \mathbb{E}_{x_0}[\theta_0]$. 

In the limit, we get:
$$ \varphi(x_0) \le \varphi(x_0) - \delta \cdot \mathbb{E}_{x_0}[\theta_0]. $$
Since the starting point $x_0$ lies strictly inside the interval $B_\varepsilon(x_0)$, we have $\mathbb{E}_{x_0}[\theta_0] > 0$, which leads to an impossible inequality and proves the theorem.
\end{proof}

\begin{theorem}
The function $\Phi(u)$ is a viscosity supersolution of the equation $\mathcal{L} \Phi = 0$.
\end{theorem}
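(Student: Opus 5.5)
The proof mirrors the subsolution argument with all inequalities reversed, now using the lower envelope $\Phi_*$. Fix $x_0 \in (0,\infty)$ and a test function $\varphi \in C^2((0,\infty)) \cap C_b([0,\infty))$ with $\Phi_*(x_0) = \varphi(x_0)$, where $\Phi_* \ge \varphi$ in a neighborhood of $x_0$ and on the whole ray $[x_0,\infty)$. Suppose, for contradiction, that $\mathcal{L}\varphi(x_0) > 0$. By continuity of $\mathcal{L}\varphi$, I choose $\delta, \varepsilon > 0$ such that $\mathcal{L}\varphi \ge \delta$ on $B_\varepsilon(x_0) \subset (0,\infty)$. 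I take $\varepsilon$ small enough that the local touching neighborhood contains $B_\varepsilon(x_0)$. By the definition of $\Phi_*$, there is a sequence $x_n \to x_0$ with $\Phi(x_n) \to \Phi_*(x_0) = \varphi(x_0)$.

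For the process started at $x_n$, I define $\tau_n$ as the exit time from $B_\varepsilon(x_0)$ and set $\theta_n = \tau_n \wedge h$. Since $B_\varepsilon(x_0)$ is bounded away from $0$, we have $\theta_n \le \tau$. It\^{o}'s formula applied to $\varphi(X_t)$ on $[0,\theta_n]$ is legitimate: the jump part is controlled by the boundedness of $\varphi$, and the compensated jump integral is a true martingale. It gives
$$\mathbb{E}_{x_n}[\varphi(X_{\theta_n})] \ge \varphi(x_n) + \delta\, \mathbb{E}_{x_n}[\theta_n].$$
The DPP (Theorem \ref{thm:DPP}) gives $\Phi(x_n) = \mathbb{E}_{x_n}[\Phi(X_{\theta_n \wedge \tau})] = \mathbb{E}_{x_n}[\Phi(X_{\theta_n})]$.

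The key localisation step is to show that $X_{\theta_n}$ lies in the region where the touching inequality holds. Because jumps are positive, the process cannot jump below $x_0-\varepsilon$, so $X_{\theta_n} \in [x_0-\varepsilon,\infty)$. The interval $[x_0-\varepsilon, x_0]$ lies in the local neighborhood, and $[x_0,\infty)$ is covered by the global condition. Hence
$$\Phi(X_{\theta_n}) \ge \Phi_*(X_{\theta_n}) \ge \varphi(X_{\theta_n}) \quad \text{a.s.}$$
Combining these estimates yields
$$\Phi(x_n) \ge \varphi(x_n) + \delta\,\mathbb{E}_{x_n}[\theta_n].$$

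I then pass to the limit $n \to \infty$. The left side tends to $\varphi(x_0)$, and $\varphi(x_n) \to \varphi(x_0)$ by continuity. For the stopping times, I do not need convergence of $\mathbb{E}_{x_n}[\theta_n]$; a uniform positive lower bound suffices, and it avoids the delicate continuity-of-exit-times argument. For $|x_n - x_0| < \varepsilon/2$, I write
$$\mathbb{E}_{x_n}[\theta_n] \ge h\,\mathbb{P}_{x_n}(\tau_n > h) \ge h\bigl(1 - \mathbb{P}(N_h \ge 1) - \mathbb{P}_{x_n}(\sup_{s\le h}|Y_s - x_n| \ge \varepsilon/2)\bigr).$$
Here $Y$ is the jump-free diffusion, whose coefficients are Lipschitz and bounded on $B_\varepsilon(x_0)$. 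Standard moment estimates, such as Doob's inequality applied to the stopped process, show that the last probability is at most $Ch/\varepsilon^2$ uniformly in $n$. Thus, for $h$ small, $\mathbb{E}_{x_n}[\theta_n] \ge h/2$ for all large $n$.

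The limit then gives $\varphi(x_0) \ge \varphi(x_0) + \delta h/2$, which is a contradiction. Therefore $\mathcal{L}\varphi(x_0) \le 0$.

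The main obstacle I expect is this uniform lower bound on $\mathbb{E}_{x_n}[\theta_n]$. The rest follows the subsolution proof essentially verbatim, with the one-sided (right) global touching condition used exactly where the positive jumps require it.
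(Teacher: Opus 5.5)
Your proposal is correct and follows the paper's proof: you argue by contradiction using It\^{o}'s formula for $\varphi$ on $[0,\theta_n]$, the DPP, and the one-sided touching condition, which applies because positive jumps keep $X_{\theta_n}\ge x_0-\varepsilon$. The one difference is that the paper asserts $\mathbb{E}_{x_n}[\theta_n]\to\mathbb{E}_{x_0}[\theta_0]$, justified only loosely by continuous dependence on initial data, whereas you use the uniform bound $\mathbb{E}_{x_n}[\theta_n]\ge h/2$, which is more elementary and fully rigorous.
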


\begin{proof}
The reasoning is analogous. Let $\varphi \in C^2((0, \infty)) \cap C_b([0, \infty))$ touch $\Phi_*$ from below at $x_0$, that is, $\Phi_*(x_0) = \varphi(x_0)$. If we assume the contrary, $\mathcal{L}\varphi(x_0) > 0$, then $\mathcal{L}\varphi(x) \ge \delta > 0$ in some $\varepsilon$-neighborhood of $x_0$. 

We choose a sequence $x_n \to x_0$ for which $\Phi(x_n) \to \Phi_*(x_0) = \varphi(x_0)$. It\^{o}'s formula gives $\mathbb{E}_{x_n}[\varphi(X_{\theta_n})] \ge \varphi(x_n) + \delta \cdot \mathbb{E}_{x_n}[\theta_n]$. Using the DPP and the touching condition from below $\Phi \ge \Phi_* \ge \varphi$, we obtain $\Phi(x_n) \ge \varphi(x_n) + \delta \cdot \mathbb{E}_{x_n}[\theta_n]$. Passing to the limit as $n \to \infty$ leads to the contradictory inequality $\varphi(x_0) \ge \varphi(x_0) + \delta \cdot \mathbb{E}_{x_0}[\theta_0]$.
\end{proof}

\begin{corollary}
\label{cor:visc_existence}
The survival probability $\Phi(u)$ is a viscosity solution of the boundary value problem in the sense of Definition \ref{def:viscosity_bvp_sol}.
\end{corollary}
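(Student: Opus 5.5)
The corollary follows by assembling the results already established; no new analytic input is needed. I will check the three requirements of Definition \ref{def:viscosity_bvp_sol} one at a time.

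\emph{Step 1 (range).} By definition $\Phi(u)=\mathbb{P}(\tau=\infty\mid X_0=u)$ is a probability. Hence $\Phi:[0,\infty)\to[0,1]$ is bounded, and its envelopes $\Phi^*$ and $\Phi_*$ are well defined and also take values in $[0,1]$.

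\emph{Step 2 (equation in the interior).} The two preceding theorems show that $\Phi$ is a viscosity subsolution and a viscosity supersolution of $\mathcal{L}\Phi=0$ on $(0,\infty)$, with touching conditions exactly as in Definition \ref{def:viscosity_sol}. By item 3 of that definition, $\Phi$ is therefore a viscosity solution in $(0,\infty)$.

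\emph{Step 3 (boundary conditions).} Lemma \ref{lem:boundary_zero} gives $\lim_{u\to0+}\Phi(u)=0$. At $u=0$ the definition of $\tau$ gives $\tau=0$, so $\Phi(0)=0$ directly; this is consistent with the convention adopted in that lemma. Lemma \ref{lem:boundary_inf} gives $\lim_{u\to\infty}\Phi(u)=1$ under \eqref{eq:net_profit}. Together these are exactly \eqref{eq:boundary_conditions}.

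The only point needing care is that the definition of $\Phi$ at $0$ agrees with the extension used in the DPP (Theorem \ref{thm:DPP}), where $\Phi(X_\tau)=\Phi(0)=0$ was used. This agreement holds because $\tau=0$ when $u=0$, so nothing further is required.
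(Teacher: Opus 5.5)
Your proposal is correct and follows the paper's approach: the paper gives no separate argument for this corollary, which is just the combination of the preceding subsolution and supersolution theorems with Lemmas \ref{lem:boundary_zero} and \ref{lem:boundary_inf}. Your extra remark that $\Phi(0)=0$ holds directly, because $\tau=0$ when $X_0=0$, is a correct justification of what the paper presents as a convention in Lemma \ref{lem:boundary_zero}.
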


\section{Comparison principle and uniqueness}
\label{sec:comparison}

In this section, we establish a comparison principle that guarantees the uniqueness of the viscosity solution. Let us rewrite the operator by separating the zero-order term:
\begin{equation*}
\mathcal{L}\Phi(u) = \frac{\sigma^2 u^2}{2} \Phi''(u) + (a u - c)\Phi'(u) - \lambda \Phi(u) + \lambda \int_0^\infty \Phi(u+y) \diff F(y).
\end{equation*}
The term $-\lambda \Phi(u)$ with the positive coefficient $\lambda > 0$ provides the operator with the necessary strict monotonicity property.

\begin{theorem}
\label{thm:comparison}
Let $U \in B([0, \infty))$ be an upper semicontinuous viscosity subsolution, and $V \in B([0, \infty))$ be a lower semicontinuous viscosity supersolution of the equation $\mathcal{L} \Phi = 0$ in the domain $(0, \infty)$. If the boundary conditions $U(0) \le V(0)$ and $\limsup_{u \to \infty} (U(u) - V(u)) \le 0$ are satisfied, then $U(u) \le V(u)$ for all $u \in[0, \infty)$.
\end{theorem}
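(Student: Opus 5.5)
We argue by contradiction with the standard doubling-of-variables technique, adapted to the one-sided non-local operator. Assume $M := \sup_{u \ge 0}(U(u)-V(u)) > 0$. Since $U(0)\le V(0)$ and $\limsup_{u\to\infty}(U-V)\le 0$, there is $R>0$ with $U-V<M/2$ on $[R,\infty)$. As $U-V$ is upper semicontinuous and bounded, the supremum over $[0,R]$ is attained. By the boundary condition at $0$ the maximum point $\bar u$ is positive, provided we know $U-V$ cannot approach $M$ as $u\to 0+$. If needed, we add a small penalization $-\eta/u$, which is harmless because $\sigma^2u^2/2\cdot(2\eta/u^3)+(au-c)(\eta/u^2)$ has a controllable sign near $0$ once combined with $-\lambda$. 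Actually the simplest route is to use upper semicontinuity at $0$ directly: $\limsup_{u\to0}(U-V)(u)\le (U-V)(0)\le 0$, because $U$ is usc and $V$ is lsc. Hence all near-maximizers lie in a compact set $[r,R]\subset(0,\infty)$.

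Next, consider $\Psi_\alpha(x,y)=U(x)-V(y)-\frac{\alpha}{2}(x-y)^2$ on $[r/2,2R]^2$, with maximizer $(x_\alpha,y_\alpha)$. Standard arguments give $\alpha|x_\alpha-y_\alpha|^2\to0$, $x_\alpha,y_\alpha\to\bar u$ for some maximizer $\bar u\in[r,R]$ of $U-V$, and $U(x_\alpha)\to U(\bar u)$, $V(y_\alpha)\to V(\bar u)$. The touching conditions in Definition \ref{def:viscosity_sol} require a global comparison on $[x_0,\infty)$ with bounded $C^2$ test functions. The quadratic penalty is only local, so I would use the nonlocal Jensen--Ishii lemma in the form of \cite{Barles2008}. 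This means splitting the integral into a small-jump part, where the test function is used, and a large-jump part $y\ge\kappa$, where $U,V$ themselves are used. The latter is legitimate since $U\le$ test function globally to the right can be arranged by modifying the test function outside a ball, with the integral converging as the modification shrinks. This yields matrices $A\le B$ (here scalars $A\le B$ with $A x_\alpha^2 - B y_\alpha^2 \le 3\alpha(x_\alpha-y_\alpha)^2$) and the common gradient $p_\alpha=\alpha(x_\alpha-y_\alpha)$ such that
\begin{align*}
\tfrac{\sigma^2x_\alpha^2}{2}A+(ax_\alpha-c)p_\alpha-\lambda U(x_\alpha)+\lambda\int_0^\infty U(x_\alpha+z)\diff F(z)&\ge -o(1),\\
\tfrac{\sigma^2y_\alpha^2}{2}B+(ay_\alpha-c)p_\alpha-\lambda V(y_\alpha)+\lambda\int_0^\infty V(y_\alpha+z)\diff F(z)&\le o(1).
\end{align*}

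Subtracting these inequalities, the second-order terms contribute at most $C\alpha|x_\alpha-y_\alpha|^2\to0$, since the diffusion coefficient $\sigma u$ is Lipschitz. The drift terms give $a p_\alpha(x_\alpha-y_\alpha)=a\alpha|x_\alpha-y_\alpha|^2\to0$. We are left with
\[
\lambda\big(U(x_\alpha)-V(y_\alpha)\big)\le \lambda\int_0^\infty\big(U(x_\alpha+z)-V(y_\alpha+z)\big)\diff F(z)+o(1).
\]
Because $(x_\alpha,y_\alpha)$ maximizes $\Psi_\alpha$, we have $U(x_\alpha+z)-V(y_\alpha+z)\le U(x_\alpha)-V(y_\alpha)$ for $x_\alpha+z,y_\alpha+z$ in the box. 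Outside the box, $U-V<M/2$ holds up to a vanishing error. Taking limsup gives $M\le \int \limsup(\cdots)\diff F\le M$. Strictness comes from the full-support assumption on $F$: the set of $z$ with $\bar u+z>R$ has positive $F$-mass, and there the integrand is at most $M/2$. This gives $M\le M-\tfrac{M}{2}F((R-\bar u,\infty))<M$, a contradiction.

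I expect the main obstacle to be the Jensen--Ishii step with the relaxed one-sided touching condition. Care is needed to make the test functions bounded, globally above $U$ on $[x_\alpha,\infty)$, and with the nonlocal term expressed through $U$ itself on the large jumps. Passing to the limit in the integrals also requires Fatou's lemma applied to usc functions.
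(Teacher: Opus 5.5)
Your proposal follows the paper's proof: doubling of variables, the Barles--Imbert nonlocal Jensen--Ishii lemma, subtraction using $x_\alpha^2A-y_\alpha^2B\le 3\alpha(x_\alpha-y_\alpha)^2$, a Fatou/semicontinuity limit giving $M\le\int W(\bar u+z)\,\diff F(z)$, and the full-support assumption on $F$ to reach a contradiction. Two of your choices are minor, slightly cleaner variants of the same argument. Localizing to a compact box justifies attainment of the maximum of $\Psi_\alpha$, which the paper only asserts on $[0,\infty)^2$. And using $W<M/2$ on $[R,\infty)$ together with $F([R-\bar u,\infty))>0$ replaces the paper's open-set argument that forces $W\equiv M$ on $(\hat x,\infty)$.
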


\begin{proof}
Assume the contrary: the supremum of the difference is strictly positive, that is, $M := \sup_{u \ge 0} (U(u) - V(u)) > 0$. From the given boundary conditions, it follows that the supremum cannot be attained at zero or at infinity. Therefore, if a global maximum point exists, it must lie strictly inside the domain $(0, \infty)$.

Let us introduce the objective function with doubled variables and a penalization parameter $\alpha > 0$:
$$ \Psi_\alpha(x, y) = U(x) - V(y) - \frac{\alpha}{2}|x - y|^2. $$
Due to the global boundedness of $U$ and $V$, the function $\Psi_\alpha$ attains its global maximum at some point $(\hat{x}_\alpha, \hat{y}_\alpha) \in[0, \infty)^2$. As $\alpha \to \infty$, the classical properties hold: $\alpha |\hat{x}_\alpha - \hat{y}_\alpha|^2 \to 0$ and $\hat{x}_\alpha, \hat{y}_\alpha \to \hat{x}$, where $U(\hat{x}) - V(\hat{x}) = M$. It is crucially important that the limit point $\hat{x}$ strictly belongs to the interval $(0, \infty)$.

We apply the non-local version of Ishii's lemma (see \cite[Theorem 1]{Barles2008}) to the maximum point $(\hat{x}_\alpha, \hat{y}_\alpha)$. There exist real numbers $X, Y \in \mathbb{R}$ satisfying the matrix inequality
\begin{equation}
\label{eq:ishii_matrix}
\begin{pmatrix} X & 0 \\ 0 & -Y \end{pmatrix} \le 3\alpha \begin{pmatrix} 1 & -1 \\ -1 & 1 \end{pmatrix},
\end{equation}
and such that the following viscosity inequalities hold (denoting $p_\alpha = \alpha(\hat{x}_\alpha - \hat{y}_\alpha)$):
\begin{align*}
\frac{\sigma^2}{2} \hat{x}_\alpha^2 X + (a \hat{x}_\alpha - c)p_\alpha - \lambda U(\hat{x}_\alpha) + \lambda \int_0^\infty U(\hat{x}_\alpha+z) \diff F(z) &\ge 0, \\
\frac{\sigma^2}{2} \hat{y}_\alpha^2 Y + (a \hat{y}_\alpha - c)p_\alpha - \lambda V(\hat{y}_\alpha) + \lambda \int_0^\infty V(\hat{y}_\alpha+z) \diff F(z) &\le 0.
\end{align*}
Subtracting the second inequality from the first and grouping the terms, we get:
\begin{multline*}
\frac{\sigma^2}{2} (\hat{x}_\alpha^2 X - \hat{y}_\alpha^2 Y) + a(\hat{x}_\alpha - \hat{y}_\alpha)p_\alpha - \lambda (U(\hat{x}_\alpha) - V(\hat{y}_\alpha)) + \\
+ \lambda \int_0^\infty (U(\hat{x}_\alpha+z) - V(\hat{y}_\alpha+z)) \diff F(z) \ge 0.
\end{multline*}
Let us pass to the limit as $\alpha \to \infty$ and analyze the limits of each term:
\begin{itemize}
    \item Multiplying the matrix inequality \eqref{eq:ishii_matrix} from the left by the vector $(\hat{x}_\alpha, \hat{y}_\alpha)$ and from the right by its transpose, we obtain the estimate $\hat{x}_\alpha^2 X - \hat{y}_\alpha^2 Y \le 3\alpha(\hat{x}_\alpha - \hat{y}_\alpha)^2$. Since $\alpha(\hat{x}_\alpha - \hat{y}_\alpha)^2 \to 0$, the diffusion term tends to zero.
    \item The drift term $a (\hat{x}_\alpha - \hat{y}_\alpha)p_\alpha = a\alpha(\hat{x}_\alpha - \hat{y}_\alpha)^2$ also tends to zero.
    \item Taking into account that $U(\hat{x}) - V(\hat{x}) = M$ and exploiting the semicontinuity, the zero-order term and the integral yield in the limit:
    $$ -\lambda M + \lambda \int_0^\infty W(\hat{x}+z) \diff F(z) \ge 0, $$
    where $W = U - V$.
\end{itemize}
Canceling by $\lambda > 0$ and bringing $M = \int_0^\infty M \diff F(z)$ under the integral sign (since $F$ is a probability measure), we arrive at the relation:
$$ \int_0^\infty (W(\hat{x}+z) - M) \diff F(z) \ge 0. $$

Since $M$ is the global supremum of the function $W$, the integrand $W(\hat{x}+z) - M$ is non-positive for all $z$. The integral of a non-positive function can be non-negative only if the function equals zero $F$-almost everywhere. Thus, $W(\hat{x}+z) = M$ for almost all $z$ in the support of the measure $F$.

We now use the assumption regarding the jump measure: the topological support of $F$ is the set $(0, \infty)$. Since the function $W = U - V$ is upper semicontinuous (as the difference between an upper and a lower semicontinuous function), the set $E = \{y > 0 : W(y) < M\}$ is open. If the set $E$ intersected the ray $(\hat{x}, \infty) = \hat{x} + \text{supp}(F)$, the measure of this intersection $F(E - \hat{x})$ would be strictly positive. This would lead to a strictly negative value of the integral, which is impossible.

Consequently, the intersection is empty, and $W(y) \equiv M$ for all $y > \hat{x}$. However, the given boundary conditions at infinity imply that:
$$ \limsup_{y \to \infty} W(y) \le \limsup_{y \to \infty} U(y) - \liminf_{y \to \infty} V(y) \le 0. $$
Since $W(y) = M$ for $y > \hat{x}$, we obtain $M \le 0$, which contradicts the assumption $M > 0$. This contradiction proves that the assumption $M > 0$ was false, and $U(u) \le V(u)$ holds over the entire domain.
\end{proof}

\begin{corollary}
The viscosity solution of the considered boundary value problem is unique in the class of bounded functions $B([0, \infty))$.
\end{corollary}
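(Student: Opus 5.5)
The corollary should follow directly from Theorem \ref{thm:comparison}, applied twice to a pair of solutions after passing to semicontinuous envelopes. Let $\Phi_1, \Phi_2 \in B([0,\infty))$ be two viscosity solutions of the boundary value problem in the sense of Definition \ref{def:viscosity_bvp_sol}.

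\emph{Step 1: set up the comparison.} Put $U = \Phi_1^*$ and $V = (\Phi_2)_*$. By construction $U$ is upper semicontinuous, $V$ is lower semicontinuous, and both are bounded. Definition \ref{def:viscosity_sol} tests subsolutions against $w^*$ and supersolutions against $w_*$. Since $(w^*)^* = w^*$ and $(w_*)_* = w_*$, $U$ is a viscosity subsolution and $V$ is a viscosity supersolution in exactly the sense required by Theorem \ref{thm:comparison}.

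\emph{Step 2: check the boundary hypotheses.} This is the step I expect to be the main obstacle. The conditions \eqref{eq:boundary_conditions} are pointwise statements about $\Phi_i$, whereas Theorem \ref{thm:comparison} needs them for the envelopes.
\begin{itemize}
\item At infinity, $\lim_{u\to\infty}\Phi_i(u) = 1$ passes to the envelopes, since they are formed from nearby values. Hence $\limsup_{u\to\infty}\big(U(u) - V(u)\big) \le 1 - 1 = 0$.
\item At zero we need $\Phi_1^*(0) \le (\Phi_2)_*(0)$. This requires interpreting $\Phi(0) = 0$ as the boundary condition $\lim_{u\to 0+}\Phi(u) = 0$. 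That is the content of Lemma \ref{lem:boundary_zero}, and it is how I would read Definition \ref{def:viscosity_bvp_sol}, possibly with an explicit remark.
\item Under that reading, $\Phi_1^*(0) = 0 = (\Phi_2)_*(0)$. The envelope on $[0,\infty)$ only involves right limits together with the value at $0$, so no values on the negative side enter.
\end{itemize}

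\emph{Step 3: conclude.} Theorem \ref{thm:comparison} gives $\Phi_1^* \le (\Phi_2)_*$. Exchanging the roles gives $\Phi_2^* \le (\Phi_1)_*$. Combining,
\[
\Phi_1 \le \Phi_1^* \le (\Phi_2)_* \le \Phi_2 \le \Phi_2^* \le (\Phi_1)_* \le \Phi_1 .
\]
All these inequalities are therefore equalities, so $\Phi_1 = \Phi_2$. As a by-product, any solution coincides with both of its envelopes and is therefore continuous.

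In particular, by Corollary \ref{cor:visc_existence} the survival probability $\Phi$ is the unique viscosity solution in $B([0,\infty))$.
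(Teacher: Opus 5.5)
Your strategy is the paper's: apply Theorem~\ref{thm:comparison} once in each direction. You carry it out more carefully than the paper does. The paper feeds $\Phi_1$ and $\Phi_2$ directly into Theorem~\ref{thm:comparison}, although that theorem needs an upper semicontinuous subsolution and a lower semicontinuous supersolution, and a general element of $B([0,\infty))$ is neither. The paper also never checks the boundary hypotheses for envelopes. Passing to $\Phi_1^*$ and $(\Phi_2)_*$ and closing the chain $\Phi_1\le\Phi_1^*\le(\Phi_2)_*\le\Phi_2\le\Phi_2^*\le(\Phi_1)_*\le\Phi_1$ is the right repair. It is the same mechanism the paper itself uses in Step~1 of Theorem~\ref{thm:smoothness}, where Lemma~\ref{lem:boundary_zero} supplies $\Phi^*(0)=\Phi_*(0)=0$.

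The issue you flag in Step~2 is not a matter of reading, though. Under the literal Definition~\ref{def:viscosity_bvp_sol}, where $\Phi(0)=0$ is only a pointwise requirement, the corollary is false.
\begin{itemize}
\item Take $w(0)=0$ and $w\equiv 1$ on $(0,\infty)$. Then $w^*=w_*=1$ on $(0,\infty)$.
\item Any admissible $\varphi$ touching from above at $x_0>0$ has a local minimum at $x_0$ and satisfies $\varphi\ge 1$ on $[x_0,\infty)$. Hence $\varphi'(x_0)=0$, $\varphi''(x_0)\ge 0$, the integral term is nonnegative, and $\mathcal{L}\varphi(x_0)\ge 0$.
\item The symmetric argument from below gives $\mathcal{L}\varphi(x_0)\le 0$.
\item So $w$ is a bounded viscosity solution with $w(0)=0$ and $\lim_{u\to\infty}w(u)=1$.
\item Yet $w$ differs from the solution $\Phi$ of Corollary~\ref{cor:visc_existence}, because $\Phi(0+)=0$ by Lemma~\ref{lem:boundary_zero}.
\end{itemize}
This $w$ is lower but not upper semicontinuous at $0$, which is exactly where the paper's direct application of the comparison principle breaks.

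So state $\lim_{u\to 0+}\Phi(u)=0$ as a necessary hypothesis in the definition of a solution, not as a possible interpretation. Equivalently, restrict to functions continuous at $0$. With that hypothesis your proof is complete.
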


\begin{proof}
Let $\Phi_1$ and $\Phi_2$ be two viscosity solutions of the considered boundary value problem. According to Definition \ref{def:viscosity_bvp_sol}, $\Phi_1$ is a viscosity subsolution, and $\Phi_2$ is a viscosity supersolution of the equation $\mathcal{L}\Phi = 0$. Both functions belong to the class of bounded functions $B([0, \infty))$ and satisfy the same boundary conditions. Applying Theorem \ref{thm:comparison}, we obtain $\Phi_1 \le \Phi_2$. Swapping the roles of the functions, we analogously deduce $\Phi_2 \le \Phi_1$. Therefore, $\Phi_1 \equiv \Phi_2$.
\end{proof}

\section{Smoothness of the solution and transition to the classical solution}
\label{sec:regularity}

In the previous sections, the existence and uniqueness of the viscosity solution $\Phi(u)$ were established. The goal of this section is to prove that the obtained function actually belongs to the class $C^2((0, \infty))$ and, consequently, is a classical solution of the IDE. This is a key step that qualitatively distinguishes our result from works limited exclusively to the viscosity framework \cite{BK2015}.

A fundamental fact for the smoothness analysis is that the degeneration of the diffusion coefficient occurs exclusively at the boundary $u=0$. Strictly inside the domain $(0, \infty)$, the linear operator preserves the property of local uniform ellipticity.

\begin{theorem}
\label{thm:smoothness}
The viscosity solution $\Phi(u)$ of the equation $\mathcal{L} \Phi = 0$ belongs to the class $C^2((0, \infty))$.
\end{theorem}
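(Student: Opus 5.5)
The proof will proceed in two stages: first show that $\Phi$ is continuous on $[0,\infty)$ using only the viscosity machinery, and then treat the non-local term as a known continuous source so that classical local elliptic theory applies.

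\textbf{Step 1: continuity.} Take the semicontinuous envelopes $\Phi^*$ (upper semicontinuous) and $\Phi_*$ (lower semicontinuous). Both are bounded with values in $[0,1]$.

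We first check that they are a viscosity subsolution and a viscosity supersolution, respectively. Definition~\ref{def:viscosity_sol} is already formulated in terms of the envelopes, so this holds with no extra work.

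Next we check the boundary data. Lemma~\ref{lem:boundary_zero} gives $\lim_{u\to0+}\Phi(u)=0$, and $\Phi(0)=0$, hence $\Phi^*(0)=\Phi_*(0)=0$. Lemma~\ref{lem:boundary_inf} gives $\lim_{u\to\infty}\Phi=1$, so $\limsup_{u\to\infty}(\Phi^*-\Phi_*)\le 0$.

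Theorem~\ref{thm:comparison}, applied with $U=\Phi^*$ and $V=\Phi_*$, then yields $\Phi^*\le\Phi_*$. Since $\Phi_*\le\Phi\le\Phi^*$ always holds, the three functions coincide, and $\Phi$ is continuous on $[0,\infty)$.

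\textbf{Step 2: localization.} Fix a compact interval $[\alpha,\beta]\subset(0,\infty)$. Set
$$g(u)=\lambda\int_0^\infty \Phi(u+y)\diff F(y).$$
By continuity and boundedness of $\Phi$ and dominated convergence, $g$ is continuous on $[\alpha,\beta]$. Locally, $\Phi$ is then a continuous viscosity solution of the linear, uniformly elliptic ODE
$$\tfrac{\sigma^2u^2}{2}v''+(au-c)v'-\lambda v=-g.$$
To justify this, take a local test function touching $\Phi$ at $x_0$. We must turn it into a test function in the sense of Definition~\ref{def:viscosity_sol}, which requires a global touch on $[x_0,\infty)$ and global boundedness. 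The plan is to modify the local test function outside a small neighbourhood: glue it smoothly to a function of the form $\Phi(x)+\eta$ for $x\ge x_0+\varepsilon$, or use a standard $\varepsilon$-approximation of $\Phi$ from above, and let the neighbourhood shrink. Since $\Phi$ is continuous, the integral term of the glued test function converges to $g(x_0)$.

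\textbf{Step 3: regularity.} Solve the classical Dirichlet problem for this ODE on $(\alpha,\beta)$ with boundary values $\Phi(\alpha)$ and $\Phi(\beta)$. Because the coefficients are smooth, $\sigma^2u^2/2\ge\sigma^2\alpha^2/2>0$, $-\lambda<0$, and $g$ is continuous, a unique solution $v\in C^2([\alpha,\beta])$ exists; for instance, variation of constants with two fundamental solutions gives it explicitly. The local comparison principle for viscosity solutions of a uniformly elliptic linear equation with strictly negative zero-order term then gives $\Phi=v$ on $[\alpha,\beta]$. Since $\alpha$ and $\beta$ are arbitrary, $\Phi\in C^2((0,\infty))$.

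The main obstacle is Step 2. Definition~\ref{def:viscosity_sol} uses global-to-the-right test functions built into the non-local operator, so we must carefully show that $\Phi$ is a viscosity solution of the localized equation with frozen source $g$. Concretely, the integral of the glued test function $\varphi$ must approximate $\int\Phi(x_0+y)\diff F(y)$ from the correct side. I would handle this by taking $\varphi=\psi$ near $x_0$ and $\varphi\ge\Phi$ elsewhere, with $\varphi-\Phi$ small in sup norm away from $x_0$. Smoothing the continuous function $\Phi$ by mollification plus a small constant gives such a $\varphi$. The error in the integral is then $O(\eta)+\lambda F((0,2\varepsilon))\|\varphi-\Phi\|_\infty$, which vanishes in the limit.
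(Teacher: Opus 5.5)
Your proposal is correct and follows the paper's route: continuity from comparing $\Phi^*$ with $\Phi_*$, continuity of the frozen integral term, then the local Dirichlet problem plus local comparison on $[\alpha,\beta]$. Your Step 2 also justifies, via the mollify-plus-constant-and-glue construction, that $\Phi$ is a viscosity solution of the localized equation with source $g$, a point the paper's Step 3 only asserts; the construction works because $\Phi$ is uniformly continuous on $[0,\infty)$, whereas gluing to $\Phi+\eta$ itself would not give an admissible $C^2$ test function.
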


\begin{proof}
The proof is carried out by the method of successive regularity improvement and is divided into three steps.

\textbf{Step 1. Continuity of $\Phi$.} The survival probability $\Phi(u)$ is bounded on $[0, \infty)$. From what was proved in Section \ref{sec:existence}, its upper semicontinuous envelope $\Phi^*$ is a viscosity subsolution, and the lower semicontinuous envelope $\Phi_*$ is a viscosity supersolution of the boundary value problem. The boundary conditions for the semicontinuous envelopes are satisfied: $\Phi^*(0) = \Phi_*(0) = 0$ by Lemma \ref{lem:boundary_zero}, and $\lim_{u \to \infty} \Phi^*(u) = \lim_{u \to \infty} \Phi_*(u) = 1$ by Lemma \ref{lem:boundary_inf}. Applying the comparison principle (Theorem \ref{thm:comparison}) to them yields $\Phi^* \le \Phi_*$ on $[0, \infty)$. Since by definition $\Phi_* \le \Phi^*$ always holds, we conclude that $\Phi_* \equiv \Phi^*$. This means that the function $\Phi(u)$ is continuous over its entire domain.

\textbf{Step 2. Continuity of the integral term.} Let us rewrite the equation $\mathcal{L}\Phi = 0$ as a linear non-homogeneous ordinary differential equation (ODE):
\begin{equation}
\label{eq:linearized_ode}
A(u)\Phi''(u) + B(u)\Phi'(u) - \lambda \Phi(u) = f(u),
\end{equation}
where $A(u) := \frac{1}{2}\sigma^2 u^2$, $B(u) := a u - c$, and the integral term is moved to the right-hand side:
$$ f(u) := -\lambda \int_0^\infty \Phi(u+y) \diff F(y). $$
We will show that the function $f(u)$ is continuous on $(0, \infty)$. From Step 1 and the boundary properties, it follows that the function $\Phi$ is continuous on the entire half-axis $[0, \infty)$ and has a finite limit as $u \to \infty$. Any function that is continuous on a closed half-axis and has a finite limit at infinity is uniformly continuous.

Fix an arbitrary $\varepsilon > 0$. By the uniform continuity of $\Phi$, there exists a $\delta > 0$ such that $|\Phi(x) - \Phi(\tilde{x})| < \varepsilon$ for any $x, \tilde{x} \ge 0$ satisfying the condition $|x - \tilde{x}| < \delta$. Let $u, v \in (0, \infty)$ be such that $|u - v| < \delta$. Then for any $y \ge 0$, we have $|(u+y) - (v+y)| = |u-v| < \delta$. Let us estimate the increment of $f$:
$$ |f(u) - f(v)| \le \lambda \int_0^\infty |\Phi(u+y) - \Phi(v+y)| \diff F(y) \le \lambda \int_0^\infty \varepsilon \diff F(y) = \lambda\varepsilon. $$
Since the bound $\lambda\varepsilon$ does not depend on the choice of the points $u$ and $v$, the function $f$ is uniformly continuous, which implies $f \in C((0, \infty))$.

\textbf{Step 3. Upgrading smoothness to $C^2$.} Consider the equation \eqref{eq:linearized_ode} on an arbitrary compact set $K = [u_1, u_2] \subset (0, \infty)$. The coefficients $A(u)$ and $B(u)$ are infinitely differentiable, with $A(u) \ge \frac{1}{2}\sigma^2 u_1^2 > 0$ on $K$, and the right-hand side $f(u)$ is continuous, as proved in Step 2.

We consider the Dirichlet boundary value problem for the linear elliptic ODE \eqref{eq:linearized_ode} on the compact set $K$ with boundary conditions matching the values of the original function $\Phi$ at the endpoints $u_1$ and $u_2$. From the classical theory of differential equations, it follows that this local problem admits a unique classical solution $\tilde{\Phi} \in C^2(K)$.

Since $\tilde{\Phi}$ is a classical solution, it is also a viscosity solution on the compact set $K$. The original continuous function $\Phi$ is also a viscosity solution on $K$. Due to the local comparison principle for viscosity solutions of linear ODEs on a bounded interval (see, for example, \cite{Crandall1992}), these solutions are identically equal: $\Phi \equiv \tilde{\Phi}$ on $K$. Consequently, $\Phi \in C^2(K)$. By the arbitrariness of the choice of $K$, we conclude that $\Phi \in C^2((0, \infty))$.
\end{proof}

The following theorem is the main result of the paper.

\begin{theorem}
The survival probability $\Phi(u)$ is the unique classical solution of the boundary value problem:
\begin{equation*}
\begin{cases}
    \frac{\sigma^2 u^2}{2} \Phi''(u) + (a u - c)\Phi'(u) - \lambda \Phi(u) + \lambda \int_0^\infty \Phi(u+y) \diff F(y) = 0, & u > 0, \\
    \Phi(0) = 0, \\
    \lim_{u \to \infty} \Phi(u) = 1.
\end{cases}
\end{equation*}
By a classical solution, we understand a function $\Phi \in C^2((0, \infty)) \cap C_b([0, \infty))$ that satisfies the equation pointwise.
\end{theorem}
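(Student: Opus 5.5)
The proof splits into existence, the classical equation, and uniqueness, and it rests on the results already established.

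\textbf{Existence.} By Corollary~\ref{cor:visc_existence}, $\Phi$ is a viscosity solution of the boundary value problem. Step~1 of the proof of Theorem~\ref{thm:smoothness} shows that $\Phi$ is continuous on $[0,\infty)$. Being bounded by $1$, it lies in $C_b([0,\infty))$. Theorem~\ref{thm:smoothness} further gives $\Phi\in C^2((0,\infty))$. The boundary conditions $\Phi(0)=0$ and $\lim_{u\to\infty}\Phi(u)=1$ are Lemmas~\ref{lem:boundary_zero} and~\ref{lem:boundary_inf}. Hence $\Phi$ belongs to the required class.

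\textbf{The equation holds pointwise.} Fix $x_0>0$. I will produce a test function that touches $\Phi$ at $x_0$ from above and is $C^2$ near $x_0$ with the same first and second derivatives. The candidate is $\varphi_\eta=\Phi+\eta\,\chi$, where $\chi\ge0$ is smooth, bounded, vanishes to second order at $x_0$, and is positive elsewhere. An example is $\chi(x)=\min\{(x-x_0)^2,1\}$, smoothed away from $x_0$. Since $\Phi\in C^2((0,\infty))\cap C_b$, we have $\varphi_\eta\in C^2((0,\infty))\cap C_b([0,\infty))$. It touches $\Phi^*=\Phi$ at $x_0$ and satisfies $\Phi\le\varphi_\eta$ everywhere, so both the local and the global (to the right) conditions of Definition~\ref{def:viscosity_sol} hold. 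The subsolution property then gives $\mathcal L\varphi_\eta(x_0)\ge0$. At $x_0$ we have $\varphi_\eta=\Phi$, $\varphi_\eta'=\Phi'$ and $\varphi_\eta''=\Phi''$, while the integral term differs by $\lambda\eta\int\chi(x_0+y)\,\diff F(y)\le\lambda\eta$. Letting $\eta\to0$ yields $\mathcal L\Phi(x_0)\ge0$. The symmetric choice $\Phi-\eta\chi$ together with the supersolution property gives $\mathcal L\Phi(x_0)\le0$. (Alternatively, take $\eta=0$ directly: $\Phi$ itself is an admissible test function, as the paper's remark notes.)

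\textbf{Uniqueness.} Suppose $\tilde\Phi\in C^2((0,\infty))\cap C_b([0,\infty))$ is another classical solution of the problem. By the consistency remark after Definition~\ref{def:viscosity_sol}, $\tilde\Phi$ is a viscosity solution in the sense of Definition~\ref{def:viscosity_bvp_sol}. One technical point is that Definition~\ref{def:viscosity_bvp_sol} requires values in $[0,1]$, which a general classical solution need not satisfy a priori. Theorem~\ref{thm:comparison} only needs boundedness, so I would apply it directly to $U=\tilde\Phi$, $V=\Phi$, which are continuous and hence semicontinuous, and then with the roles exchanged. The boundary conditions $U(0)=V(0)=0$ and $\limsup_{u\to\infty}(U-V)=0$ hold. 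This gives $\tilde\Phi\equiv\Phi$.

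I do not expect a real obstacle here, since the heavy lifting is in the earlier sections. The only delicate point is checking that the global right-side touching requirement in the definition is met, and choosing $\Phi$ itself as the test function handles this.
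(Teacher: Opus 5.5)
Your proposal is correct and follows the paper's own proof: existence from Corollary~\ref{cor:visc_existence}, Theorem~\ref{thm:smoothness} and the two boundary lemmas; the pointwise equation by using $\Phi$ itself as a test function from above and below, which is exactly the consistency property the paper cites; and uniqueness by applying Theorem~\ref{thm:comparison} in both directions to $\Phi$ and $\tilde\Phi$, where your observation that only boundedness (not the $[0,1]$ range) is needed is a fair sharpening. One harmless slip: for $\chi=\min\{(x-x_0)^2,1\}$ one has $\chi''(x_0)=2$, so $\varphi_\eta''(x_0)=\Phi''(x_0)+2\eta$ rather than $\Phi''(x_0)$, but this only perturbs $\mathcal{L}\varphi_\eta(x_0)$ by $\sigma^2x_0^2\eta\to0$, and your $\eta=0$ route avoids the issue altogether.
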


\begin{proof}
\textbf{Existence.} Corollary \ref{cor:visc_existence} states that $\Phi(u)$ is a viscosity solution of the considered boundary value problem. Theorem \ref{thm:smoothness} guarantees that $\Phi \in C^2((0, \infty))$. From the consistency property of the theory of viscosity solutions (see \cite{Crandall1992}), it follows that any twice continuously differentiable viscosity solution satisfies the differential equation in the classical (pointwise) sense. The fulfillment of the boundary conditions is justified in Lemmas \ref{lem:boundary_zero} and \ref{lem:boundary_inf}.

\textbf{Uniqueness.} Let $\tilde{\Phi} \in C^2((0, \infty)) \cap C_b([0, \infty))$ be another classical solution of the considered boundary value problem. Since $\tilde{\Phi}$ is twice continuously differentiable, it satisfies the equation pointwise and, by the fundamental consistency property, is automatically a viscosity solution. Because $\tilde{\Phi}$ belongs to the class of bounded functions $B([0, \infty))$ and satisfies the same boundary conditions, the comparison principle (Theorem \ref{thm:comparison}) can be applied to the functions $\Phi$ and $\tilde{\Phi}$. It immediately follows that $\tilde{\Phi} \equiv \Phi$.
\end{proof}

Thus, it has been proved that the survival probability is a sufficiently smooth function and satisfies the corresponding integro-differential equation in the classical sense.

\section*{Competing interests}
The author declares no competing interests.

\acknowledgement 
The author expresses his gratitude to Yuri Kabanov for the problem formulation.

\end{document}